\renewcommand{\Re}{\mathbb{R}}
\DeclareMathOperator{\minimize}{minimize}
\DeclareMathOperator{\argmin}{argmin}
\def\st{\mbox{s.t.}}
\newtheorem{prop}{Proposition}
\address{%
$^{1}$ \quad Institute for High Performance Computing and Networking (ICAR), CNR, Naples, Italy; laura.antonelli@cnr.it\\
$^{2}$ \quad University of Campania ``Luigi Vanvitelli'', Department of Mathematics and Physics, Caserta, Italy; daniela.diserafino@unicampania.it,
   valentina.desimone@unicampania.it}
\abstract{We modify the total-variation-regularized image segmentation model proposed by Chan, Esedo\=glu and Nikolova
[SIAM Journal on Applied Mathematics 66, 2006] by introducing local regularization that takes into account spatial
image information. We propose some techniques for defining local regularization parameters, based on the
cartoon-texture decomposition of the given image, on the mean and median filters, and on a thresholding technique,
with the aim of preventing excessive regularization in piecewise-constant or smooth regions and preserving spatial
features in nonsmooth regions. We solve the modified model by using split Bregman iterations.
Numerical experiments show the effectiveness of our approach.}
\begin{document}

\section{Introduction\label{sec:intro}}

Image segmentation is a fundamental problem in image processing and computer vision. 
Its goal is to divide the given image into regions that represent different objects in the scene.
Variational models for image segmentation have been widely investigated, proving to be very effective
in many applications -- see, e.g.,~\cite{bib:WangEtAl2020} and the references therein.
Roughly speaking, the segmentation may be obtained by minimizing a cost functional which linearly
combines regularization and data fidelity terms:
\begin{equation}
\label{eq:var_model}
  \min_{u \in \Omega}  \, E(u) \, \equiv \, F(u)+\lambda G(u),
\end{equation}
where $u: \Omega \subset \Re^2 \rightarrow \Re$ provides a representation of the segmentation and $\lambda > 0$
is a parameter that controls the weight of the fidelity term $G$ versus the regularization term $F$. 
A widely-used segmentation model is the two-phase partitioning model introduced by Chan, Esedo\=glu and
Nikolova~\cite{bib:CEN2006}, which we refer to as CEN model:
\begin{equation} \label{eq:cen_continuous}
\begin{array}{ll}
\displaystyle \underset{u, c_1, c_2}{\minimize} & \displaystyle \int_{\Omega} | \nabla u | \, dx
                        + \lambda \int_{\Omega} \left( (c_1 - \bar u (x))^2 u(x) + (c_2 - \bar u (x))^2  (1-u(x)) \right) dx, \\
\st            & 0 \le u \le 1, \\[2pt]
                & c_1, c_2 > 0.
\end{array}
\end{equation}
Here $\bar u$ denotes the image to be segmented, which is assumed to take its values in $[0,1]$.
The CEN model allows us to obtain one of the two domains defining the segmentation,
$\Sigma$ and $\Omega \backslash \Sigma$, by setting
$$
     \Sigma = \left\{ x : u(x) > \alpha \right\} \mbox{ for a.e. } \alpha \in (0,1),
$$
where $u$ is the solution of problem~\eqref{eq:cen_continuous}.
Note that~\eqref{eq:cen_continuous} is the result of a suitable relaxation of the Chan-Vese model~\cite{bib:ChanVese2001}
leading to a convex formulation of that problem for any given~$(c_1, c_2)$.

Here we start from a discrete version of the CEN model. Let
$$
    \Theta =  \left\{ (i,j) : 0 \leq i \leq m-1, \, 0 \leq j \leq n-1\right\}
$$
be a discretization of $\Omega$ consisting of an $ m \times n$ grid of pixels
and let
$$
    | \nabla_x u |_{i,j}  = | \delta_x^+ u |_{i,j} , \quad  | \nabla_y u |_{i,j} = | \delta_y^+ u |_{i,j}
$$ 
where $\delta_x^+$  and $\delta_y^+$ are the forward finite-difference operators in the $x$- and $y$-directions, with unit spacing,
and the values $u_{i,j}$ with indices outside~$\Theta$ are defined by replication. We consider the following discrete version
of~\eqref{eq:cen_continuous} with anisotropic discrete total variation (TV):
\begin{equation} \label{eq:cen_discr}
\begin{array}{ll}
    \displaystyle \underset{u, \, c_1, \, c_2}{\minimize} &  \displaystyle \sum_{i,j} \left( | \nabla_x u |_{i,j} + | \nabla_y u |_{i,j} \right) \, + \,
                         \lambda \sum_{i,j} \left( \left( c_1-\bar u_{i,j} \right)^2 u_{i,j} + \left( c_2-\bar u_{i,j} \right)^2 (1-u_{i,j})  \right), \\
   \st             & 0 \le u_{i,j} \le 1, \\[2pt]  
                    & c_1, c_2 > 0.
\end{array}
\end{equation}
After the minimization problem~\eqref{eq:cen_discr} is solved, the segmentation is obtained
by taking
\begin{equation}
\label{eq:sigma_discr}
    \Sigma =  \left  \{(i,j) \in \Theta : u_{i,j} > \alpha \right \},
\end{equation}
for some $\alpha \in (0,1)$. Problem~\eqref{eq:cen_discr} is usually solved by alternating the minimization with
respect to $u$ and the minimization with respect to $c_1$ and $c_2$. In the sequel, we denote $E(u,c_1,c_2)$
the objective function in~\eqref{eq:cen_discr}, and $F(u)$ and $G(u, c_1, c_2)$ its discrete regularization and fidelity
terms, respectively.

%

The selection of a parameter $\lambda$ able to balance $F(u)$ and $G(u, c_1, c_2)$ and to produce a meaningful solution
is a critical issue. Too large values of $\lambda$ may produce oversegmentation,
while too small values of $\lambda$ may produce undersegmentation~\cite{bib:Candemir2010}.
Furthermore, a constant value of $\lambda$ may not be suitable for the whole image, i.e., different regions of the image may
need different values. In recent years, spatially adaptive techniques have been  proposed, focusing on local information, such as gradient,
curvature, texture and noise estimation cues -- see, e.g.,~\cite{bib:Rao2010}. Space-variant regularization
has been also widely investigated in the context of image restoration, using TV and its
generalizations -- see, e.g.,~\cite{bib:Fong2011,bib:Bredies2013,bib:ChanRH2013,bib:Ma2018,bib:CalatroniLanzaPragliolaSgallari2019}.

In this work, we propose some techniques for setting the parameter $\lambda$ in an adaptive way based on
spatial information, in order to prevent excessive regularization of smooth regions while preserving spatial features
in nonsmooth areas. Our techniques are based on the so-called cartoon-texture decomposition of the given image,
on the mean and median filters, and on thresholding techniques.
The resulting locally adaptive segmentation model can be solved either by smoothing the discrete TV
term -- see, e.g.,~\cite{bib:Antonelli2018,bib:diserafino2020} -- and applying optimization solvers for differentiable problems
such as spectral gradient methods~\cite{bib:BirginMartinezRaydan2000,bib:Antonelli2016SPGA,bib:diserafino2018,bib:crisci2020}
or by using directly optimization solvers for nondifferentiable problems, such as Bregman, proximal and ADMM methods
\cite{bib:Bregman1967,bib:Osher2005mms,bib:Goldstein2009sb,bib:Campagna2017,bib:Parikh2014,bib:Beck2009TV,bib:Boyd2011admm}.
In this work we use an alternating minimization procedure exploiting the split Bregman (SB) method proposed in~\cite{bib:Goldstein2009sb}.
The results of numerical experiments on images with different characteristics show the effectiveness of our approach
and the advantages coming from using local regularization.

The rest of this paper is organized as follows. In Section~\ref{sec:locally-adptive-lambda} we propose our spatially adaptive techniques.
In Section~\ref{sec:split-bregman} we describe the solution of the segmentation model using those techniques by the aforementioned
SB-based alternating minimization method. In Section~\ref{sec:results} we discuss the results obtained with our approaches on several
test images, performing also a comparison with the CEN model and a segmentation model developed for images with texture.
The results show the effectiveness of our approach and the advantages coming from the use of local regularization.
Some conclusions are provided in Section~\ref{sec:conclusions}.

\section{Defining local regularization by exploiting spatial information\label{sec:locally-adptive-lambda}}

The regularization parameter $ \lambda $ plays an important role in the segmentation process, because it controls
the tradeoff between data fidelity and regularization. In general, the smaller the parameter in~\eqref{eq:cen_discr}
the smoother the image content, i.e., image details are flattened or blurred. Conversely, the larger the parameter the more the enhancement
of image details, and hence noise may be retained or amplified. Therefore, $\lambda$ should be selected according to local spatial information.
A small value of $\lambda$ should be used in the smooth regions of the image to suppress noise, while a large value of $\lambda$
should be considered to preserve spatial features in the nonsmooth regions. In other words, a matrix $\Lambda = (\lambda_{i,j})$ should
be associated with the image, where $\lambda_{i,j}$ weighs pixel $(i,j)$, as follows:
\begin{equation} \label{eq:cen_discr_lambdaij}
\begin{array}{ll}
    \displaystyle \underset{u, c_1, c_2}{\minimize} &  \displaystyle \sum_{i,j} \left( | \nabla_x u |_{i,j} + | \nabla_y u |_{i,j} \right) \, + \,
                         \sum_{i,j} \lambda_{i,j}  \left( \left( c_1-\bar u_{i,j} \right)^2 u_{i,j} + \left( c_2-\bar u_{i,j} \right)^2 (1-u_{i,j})  \right), \\
    \st           & 0 \le u_{i,j} \le 1, \\[2pt]  
                   & c_1, c_2 > 0.
\end{array}
\end{equation}

Furthermore, in order to avoid oversegmentation or undersegmentation, it is convenient to fix
a minimum and a maximum value for the entries of $\Lambda$, so as to drive the level of regularization in a reasonable range,
depending on the image to be segmented.

We define $\Lambda$ as a function of the image $\bar u$ to be segmented:
\begin{equation}
\label{eq:fu} 
f: \bar u_{i,j}  \rightarrow \lambda_{i,j} \in [\lambda_{min},\lambda_{max}], 
\end{equation}
where $0 < \lambda_{min} < \lambda_{max} < \infty$. We propose three choices of $f$, detailed in the next subsections.

We note that problem~\eqref{eq:cen_discr_lambdaij} still has a unique solution for any fixed $(c_1, c_2)$, as a consequence of the
next proposition.
\begin{prop}
\label{th:convexity}
For any fixed $(c_1, c_2)$, problem~\eqref{eq:cen_discr_lambdaij} is a convex problem.
\end{prop}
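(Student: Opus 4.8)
The plan is to check the two things that make an optimization problem convex: that the feasible set is convex and that the objective $E(\cdot,c_1,c_2)$ is convex once $(c_1,c_2)$ is held fixed. The feasible set is the box $\{u : 0 \le u_{i,j} \le 1 \text{ for all } (i,j)\}$, an intersection of finitely many half-spaces and hence convex, so all the work is on the objective.

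First I would handle the regularization term $F(u)=\sum_{i,j}\left(|\nabla_x u|_{i,j}+|\nabla_y u|_{i,j}\right)$. For each $(i,j)$ the quantities $\delta_x^+ u|_{i,j}$ and $\delta_y^+ u|_{i,j}$ are fixed linear functionals of the vector $u$ (forward differences; the replication convention for out-of-grid indices is itself linear), so $|\nabla_x u|_{i,j}$ and $|\nabla_y u|_{i,j}$ are compositions of the convex function $|\cdot|$ with a linear map and are therefore convex. A finite sum of convex functions is convex, so $F$ is convex.

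Next I would handle the fidelity term. With $(c_1,c_2)$ fixed and $\bar u$ a given datum, the summand indexed by $(i,j)$, namely $\lambda_{i,j}\big((c_1-\bar u_{i,j})^2 u_{i,j}+(c_2-\bar u_{i,j})^2(1-u_{i,j})\big)$, is an affine function of the single coordinate $u_{i,j}$: it equals $a_{i,j}u_{i,j}+b_{i,j}$ with $a_{i,j}=\lambda_{i,j}\big((c_1-\bar u_{i,j})^2-(c_2-\bar u_{i,j})^2\big)$ and $b_{i,j}=\lambda_{i,j}(c_2-\bar u_{i,j})^2$ both constant. Affine functions are convex, and $G(\cdot,c_1,c_2)$ is a sum of such functions, hence affine and in particular convex. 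Therefore $E(\cdot,c_1,c_2)=F+G$ is convex, and \eqref{eq:cen_discr_lambdaij} minimizes a convex function over a convex set.

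There is no genuinely hard step: the only point that deserves care is to record that, once $c_1$, $c_2$ and $\bar u$ are frozen, the fidelity term is \emph{linear} in $u$ rather than quadratic — the quadratic dependence lives in $c_1,c_2$, not in $u$ — so no appeal to positive semidefiniteness of a Hessian is needed. I would also remark that the sign of the weights $\lambda_{i,j}$ is irrelevant for convexity, whereas the assumption $\lambda_{i,j}\in[\lambda_{min},\lambda_{max}]$ with $\lambda_{min}>0$ is what is used elsewhere to keep the problem well posed; uniqueness of the minimizer is not strict convexity here (neither $F$ nor $G$ is strictly convex) and is inherited from the structure of the CEN model in~\cite{bib:CEN2006}.
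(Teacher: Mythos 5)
Your proof is correct and complete for the stated proposition. The paper disposes of it in one line: it simply cites the known convexity of the CEN objective for fixed $(c_1,c_2)$ (from~\cite{bib:CEN2006}) and observes that replacing the global $\lambda$ by constants $\lambda_{i,j}$ changes nothing, since these are independent of $u$. You instead verify everything from first principles: box constraint convex, TV term a finite sum of absolute values of linear functionals of $u$, fidelity term affine in $u$ once $(c_1,c_2)$ and $\bar u$ are frozen. The mathematical content is the same --- in both arguments the decisive fact is that the fidelity summands are affine in $u_{i,j}$, so reweighting them by constants preserves convexity --- but your version is self-contained where the paper's is a citation, and your observation that the sign of $\lambda_{i,j}$ is irrelevant precisely because the summands are affine (not merely convex) is a correct and slightly sharper remark than the paper makes. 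Your closing caveat is also well taken: the paper asserts just before the proposition that uniqueness of the solution follows ``as a consequence of'' the proposition, but convexity alone does not give uniqueness (neither $F$ nor $G$ is strictly convex), so that claim needs the additional structural argument from~\cite{bib:CEN2006} that you point to rather than the proposition itself. None of this is a gap in your proof of the statement as posed.
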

\begin{proof}
Since the CEN model is convex for any fixed $(c_1, c_2)$, the thesis immediately follows
from the fact that the parameters $\lambda_{i,j}$ are constant with respect to $u$.
\end{proof}

\subsection{Regularization based on the cartoon-texture decomposition\label{cartoon-texture}}

We define $f(\bar u_{i,j})$ by using the Cartoon-Texture Decomposition (CTD)
of the image discussed in~\cite{bib:Buades2010CTD}. CTD splits any image $u$ into the sum of two images, $w$ and $v$,
such that $w$ represents the cartoon or geometric (piecewise-smooth) component of $u$,
while $v$ represents the oscillatory or textured component, i.e., $v$ contains essentially textures,
oscillating patterns, fine details and noise. The algorithm for computing CTD acts as described next.
For each image pixel, we decide whether it belongs to the cartoon or the textural part by computing
a local indicator associated with an image window around the pixel. The main feature of a textured region
is its high TV, which decreases very fast under low-pass filtering. This leads to the following definition
of local total variation (LTV) at a pixel $(i,j)$:
\begin{equation}
\label{eq:ltv}
LTV_\sigma (u)_{i,j} = \left( L_\sigma * | \nabla u |_{i,j} \right),
\end{equation}
\noindent where $L_\sigma$ is a low-pass filter, $\sigma$ is a scale parameter,
$ | \nabla u |_{i,j} = \sqrt{ (\nabla_x u)_{i,j}^2 +  (\nabla_y u)_{i,j}^2 } $ and $*$~denotes the convolution product.
The relative reduction rate of LTV,
\begin{equation}
\label{eq:rrrltv}
(\rho_\sigma)_{i,j} =\frac{LTV_\sigma (u)_{i,j}-LTV_\sigma (L_\sigma *u)_{i,j}}{LTV_\sigma (u)_{i,j}} ,
\end{equation}
gives the local oscillatory behavior of the image.
A value of $(\rho_\sigma)_{i,j}$ close to $0$ means that there is little relative reduction of LTV by the low-pass filter,
thus the pixel $(i,j)$ belongs to the cartoon. Conversely, $(\rho_\sigma)_{i,j}$ close to 1 indicates that the relative reduction
is large and hence the pixel belongs to a textured region.

We use~\eqref{eq:rrrltv} for defining the weights $\lambda_{i,j}$. The basic idea is that a large
regularization parameter is needed if the pixel $(i,j)$  belongs to the cartoon, while the parameter must be reduced 
in texture regions. Therefore, we set the function $f$ in~\eqref{eq:fu} as
\begin{equation}
\label{eq:ctd}
f(\bar u_{i,j}) \equiv f^{\text{CTD}}(\bar u_{i,j}) = \max \left \{ \frac{\lambda_{min}}{\lambda_{max}}, \, 1-(\rho_\sigma)_{i,j}  \right \} \lambda_{max}
\end{equation}
and $(\rho_\sigma)_{i,j}$ is defined by using the given image $\bar u$. Following~\cite{bib:Buades2011CTDimplem}, we set $L_\sigma$ equal to the
Gaussian filter.

\subsection{Regularization based on the mean and median filters\label{sec:filters}}

We define a technique based on spatial filters that are commonly used to enhance low-frequency details or
to preserve edges~\cite{bib:jain1989IP,bib:Gonzalez2001IP}.
More precisely, we combine the mean and median filters; the former aims at identifying smooth regions,
where the regularization parameter can take small values, the latter aims at identifying edges, where the parameter
must remain large.
Mean filtering is a simple and easy-to-implement method for smoothing images, i.e., for reducing the intensity
variation between a pixel and its neighbors. It also removes high-frequencies components due to the noise and the
edges in the image, so the mean filter is a low-pass filter. The median filter preserves edges and useful details in the image.

Based on these considerations, we define a weight function as follows:
\begin{equation} \label{eq:MMweights}
  \omega_{i,j} = \left\{ \begin{array}{cl}
              | \bar u_{i,j} - (L_{h1} * \bar u)_{i,j}| & \mbox{ if } |(L_{h1} * \bar u)_{i,j} - (M_{h2} * \bar u)_{i,j}| < t, \\[4pt]
                 1                                                     & \mbox{ otherwise.} \\
              \end{array} \right.
\end{equation}
where $h_1$ is the window size of the mean filter $L_{h1}$, $h_2$ is the window size of the median filter $M_{h2}$,
and $t$ is a threshold value acting as a cutoff between the two filters. Note that $0 \leq \omega_{i,j} \leq 1$
and the pixels in homogeneous regions have $\omega_{i,j}$ close to $1$. The function $f$ in~\eqref{eq:fu} is set as
\begin{equation}\label{eq:mm}
f(\bar u_{i,j}) \equiv f^{\text{MM}}(\bar u_{i,j}) = \max \left \{ \frac{\lambda_{min}}{\lambda_{max}}, \,1-\omega_{i,j}  \right \} \lambda_{max}, 
\end{equation}
where MM stands for ``Mean and Median''.

\subsection{Regularization based on thresholding\label{sec:thr}}

This approach implicitly exploits the definition of $\Sigma$ in~\eqref{eq:sigma_discr} to set $\lambda_{i,j}$. The idea is to use
large values of $\lambda_{i,j}$ when $u_{i,j}$ is close to 1 and small values when $u_{i,j} $ is close to 0.
Therefore, the parameters $\lambda_{i,j}$ are not defined in terms of the given image $\bar u$ only. If the function $u$ identifying
the segmentation were known a priori, we could define $\lambda_{i,j} = f(u_{i,j})$
as follows:
\begin{equation}
\label{eq:thr}
f(u_{i,j}) \equiv f^{\text{THR}}(u_{i,j}) = 10^{\eta_{i,j}} \in [\lambda_{min},\lambda_{max}],
\end{equation}
where
\begin{equation*}
\eta_{i,j} = emax - (1 - u_{i,j}) (emax - emin),
\end{equation*}
$emax = \log_{10}\lambda_{max}$ and $emin = \log_{10}\lambda_{min}$.

Since the function $u$ must be computed by minimizing~\eqref{eq:cen_discr} and this is done by using
an iterative procedure, we decided to update $\lambda_{i,j}$ at each iteration, using the current value of $u_{i,j}$. 
On the other hand, in this case evaluating $f$ is computationally cheaper than in the previous approaches, which apply
two-dimensional convolution operators; thus the cost of the iterative update of $\lambda_{i,j}$ is practically negligible.

\section{Solution by split Bregman iterations\label{sec:split-bregman}}

As mentioned in Section~\ref{sec:intro}, we use an alternating minimization method to solve problem~\eqref{eq:cen_discr_lambdaij}.
Given $u_{i,j}^{k-1}$, by imposing the first-order optimality conditions with respect to $c_1$ and $c_2$, we get
\begin{equation}
\label{eq:loc_c1c2}
c_1^k=\frac{\sum_{i,j} \lambda_{i,j}\bar u_{i,j} u_{i,j}^{k-1}} {\sum_{i,j} \lambda_{i,j} u_{i,j}^{k-1}},
\quad
c_2^k=\frac{\sum_{i,j} \lambda_{i,j}\bar u_{i,j} (1-u_{i,j}^{k-1})} {\sum_{i,j} \lambda_{i,j}(1-u_{i,j}^{k-1})} .
\end{equation}
For the solution of~\eqref{eq:cen_discr_lambdaij} with respect to $u$ we use the split Bregman (SB) method~\cite{bib:Goldstein2009sb}.
Let
\begin{equation} \label{eq:loc_fidelity}
G_{loc} (u, c_1, c_2) =  \sum_{i,j} \lambda_{i,j}  \left( \left( c_1-\bar u_{i,j} \right)^2 - \left( c_2-\bar u_{i,j} \right)^2  \right) u_{i,j} \\
                                  =  \sum_{i,j} r_{i,j} u_{i,j} ,
\end{equation}
where 
$$
    r_{i,j} = \lambda_{i,j}  \left( \left( c_1-\bar u_{i,j} \right)^2 - \left( c_2-\bar u_{i,j} \right)^2  \right) .
$$
%
Following~\cite{bib:GoldsteinBressonOsher2010}, we reformulate the minimization problem as follows:
\begin{equation}
\label{eq:sb_model}
\begin{array}{cl}
\displaystyle \underset{u,d_x,d_y}{\minimize}  & \displaystyle \| d_x \|_1 + \| d_y \|_1 + G_{loc}(u, c_1, c_2), \\
\mbox{s.t.} & 0 \le u_{i,j} \le 1, \\
                  & d_x=\nabla_x u ,\\
                  & d_y=\nabla_y u.\\  
\end{array}
\end{equation}
Given $c_1^k$ and $c_2^k$, the SB method applied to~\eqref{eq:sb_model} reads:
\begin{align}
    u^k      & = \underset{0 \leq u_{i,j} \leq 1}{\argmin} \,G_{loc}(u, c_1^k,c_2^k) + \frac{\mu}{2} \| d_x^{k-1} - \nabla_x u - b_x^{k-1} \|_2^2
                                  + \frac{\mu}{2} \| d_y^{k-1} - \nabla_y u - b_y^{k-1} \|^2_2 , \nonumber \\
    d_x^k  & = \underset{d_x}{\argmin} \, \| d_x \|_1 + \frac{\mu}{2} \| d_x - \nabla_x u^k - b_x^{k-1} \|_2^2 ,  \nonumber \\
    d_x^k  & = \underset{d_y}{\argmin} \, \| d_y \|_1 + \frac{\mu}{2} \| d_y - \nabla_y u^k - b_y^{k-1} \|_2^2 ,  \label{eq:ASBSegloc} \\
    b_x^k  & = b^{k-1}_x + \mu(\nabla_x u^k - d_x^k) ,  \nonumber \\[1mm]
    b_y^k  & = b^{k-1}_y + \mu(\nabla_x u^k - d_y^k) ,  \nonumber
\end{align}
where $ \mu > 0 $.

Closed-form solutions of the minimization problems with respect to $d_x$ and $d_y$
can be computed using the soft-thresholding operator:
$$
     d_x^k = \mathcal{S} \left( \nabla_x  u^k+b_x^k, \frac{1}{\mu} \right), \quad d_y^k = \mathcal{S} \left( \nabla_y  u^k+b_y^k, \frac{1}{\mu} \right),
$$
where, for any $v = ( v_{i,j} )$ and any scalar $\gamma > 0$,
$$
     \mathcal{S} (v, \gamma) = z = (z_{i,j}),  \quad  z_{i,j} = \frac{z_{i,j}}{| z_{i,j} |} \max \left\{ | z_{i,j} | - \gamma, \, 0 \right\}.
$$

Finally, an approximate solution to the minimization problem with respect to $u$ can be obtained by applying Gauss-Seidel (GS) iterations
to the following system, as explained in~~\cite{bib:GoldsteinBressonOsher2010}:
\begin{equation}
\label{eq:sys} 
- \Delta u_{i,j} = \frac{r_{i,j}}{\mu} + ( \nabla_x ( b_x^k - d_x^k)_{i,j} )+ ( \nabla_y ( b_y^k - d_y^k)_{i,j} ),
\end{equation}
where $\Delta$ is the classical finite-difference discretization of the Laplacian.
If the solution to~\eqref{eq:sys} lies outside $[0, 1]^{m \times n}$, then it is projected onto that set. We denote $\mathcal{P}_{[0,1]}$
the corresponding projection operator.

The overall solution method is outlined in Algorithm~\ref{alg:SBLOC}. Note that when the approach in Section~\ref{sec:thr} is used,
the values $\lambda_{i,j}$ must be updated at each iteration $k$, using~\eqref{eq:thr} with $u = u^k$.

\begin{algorithm}[h!] 
\caption{SB-based method for spatially adaptive segmentation\label{alg:SBLOC}}
\SetAlgoLined
{\setstretch{1.2}
    \SetKwInOut{KwIn}{Input} \KwIn{ $\bar{u}, \, \lambda_{min}, \, \lambda_{max}, \, f, \, \mu, \, \alpha$ (with $f$
    defined in~\eqref{eq:ctd} or \eqref{eq:mm} or \eqref{eq:thr})}
    \SetKwInOut{KwOut}{Output} \KwOut{ $u, \, c_1, \, c_2$}
    Set  $u^0=\bar u, \, d_x^0=0, \, d_y^0=0, \, b_x^0=0, \, b_y^0=0$ \\
    Compute  $\Lambda = f(u^0)$  \\
    \For{ $k = 1, 2, \ldots$ }{
       Compute $c_1^k$ and $c_2^k$ by \eqref{eq:loc_c1c2}  \\	
       Compute $u^k$ by applying GS iterations to system~\eqref{eq:sys}  \\
       $u^k  = \mathcal{P}_{[0,1]}(u^k)$  \\
       $d_x^k = \mathcal{S}(\nabla_x  u^k + b_x^k, 1/\mu) $   \\
       $d_y^k = \mathcal{S}(\nabla_y  u^k + b_y^k, 1/\mu) $   \\
       Update $ b_x^k $ and $ b_y^k$ according to \eqref{eq:ASBSegloc} \\
    }
    Set $\Sigma = \left \{ (i,j) \in \Theta : u_{i,j}^k > \alpha \right \}$    \\
} 
\end{algorithm}

\section{Results and comparisons\label{sec:results}}

The three spatially adaptive regularization techniques were implemented in MATLAB, using the Image Processing Toolbox.
Algorithm~\ref{alg:SBLOC} was implemented by modifying the \emph{Fast Global Minimization Algorithm for Active Contour Models}
by X.~Bresson, available from \texttt{http://htmlpreview.github.io/?https://github.com/xbresson/old\_codes/blob/master/codes.html}.
This is a C code with a MATLAB MEX interface.

$L_\sigma$ in~\eqref{eq:ltv} is defined as a rotationally symmetric Gaussian filter with size 3 and standard deviation
$\sigma=2$. The mean and median filters use windows of size 3 and 7, respectively, and the parameter $t$ in~\eqref{eq:MMweights}
is set as $t=0.5$. The parameter $\alpha = 0.5$ is used to identify the domain $\Sigma$ according to~\eqref{eq:sigma_discr}.

In the original and modified codes, the SB iterations are stopped as follows:   
\begin{equation}\label{eq:stop-crit}
| \mathtt{diff}^k - \mathtt{diff}^{k-1} | \le \mathtt{tol} \quad  \mbox{and} \quad k > \mathtt{maxit} ,
\end{equation}
where
$$
     \mathtt{diff}^l = \frac{\mathtt{sd}(u^l)}{\mathtt{sd}(u^l) \cdot \mathtt{sd}(u^{l-1})}, \quad
     \mathtt{sd}(u^l) = \sum_{i,j} (u_{i,j}^l - u_{i,j}^l)^2, 
$$
$\mathtt{tol}$ is a given tolerance, and $\mathtt{maxit}$ denote the maximum number of outer iterations.
The stopping criterion for the GS iterations is
$$
     E(u^k) = 1 - \frac{|\mathtt{msd}^k - \mathtt{msd}^1|}{\mathtt{msd}^1} \le \mathtt{tol_{GS}}
     \quad  \mbox{and} \quad k > \mathtt{maxit_{GS}} 
$$
where
$$
    \mathtt{msd}^l =  \frac{1}{m n} \sum_{i,j} (u_{i,j}^l - u_{i,j}^{l-1})^2,
$$
and $\mathtt{tol_{GS}}$ and $\mathtt{maxit_{GS}}$ are the tolerance and the maximum number of iterations
for the GS method, respectively.
In our experiments we set $\mathtt{tol} = 10^{-6}$, $\mathtt{maxit} = 30$, $\mathtt{tol_{GS}} = 10^{-2}$ and
$\mathtt{maxit_{GS}} = 50$.

\begin{table}[b!]
\begin{center}
\caption{Test images and their sizes.\label{tab:images}}
\setlength{\tabcolsep}{6pt}
\begin{tabular}{lc}
\toprule
\textbf{image}             & \textbf{size (pixels)} \\
\midrule
\texttt{bacteria}	          &  $233 \times 256$ \\
\texttt{bacteria2} 	  & $380 \times 391$ \\
\texttt{brain}		  & $210 \times 210$ \\
\texttt{cameraman}     & $204 \times 204$ \\
\texttt{flowerbed}         & $321 \times 481$ \\
\texttt{ninetyeight}	  & $300 \times 225$ \\
\texttt{squirrel}	          & $167 \times 230$ \\
\texttt{tiger}                 & $321 \times 481$ \\
\bottomrule
\end{tabular}
\end{center}
\end{table}
The adaptive models are compared with the original CEN model on different images widely used in image processing tests,
listed in Table~\ref{tab:images} and shown in Figures~\ref{fig:CENvsCTD} to~\ref{fig:smoothVStexture}.
In particular, the images \texttt{bacteria}, \texttt{bacteria2}, \texttt{brain}, \texttt{cameraman}, \texttt{squirrel}
and \texttt{tiger} are included in Bresson's code distribution, \texttt{flower\-bed} has been downloaded from
the Berkeley segmentation dataset~\cite{bib:Arbelaez2011BK500} available from 
\texttt{https://www2.eecs.berkeley.edu/Research/Projects/CS/vision/grouping/resources.html} (image \texttt{\#86016}),
and \texttt{ninetyeight} is available from \texttt{https://tineye.com/query/2817cf0d186fbfe263a188952829a3b5e699d839}.
We note that \texttt{tiger} is included in Bresson's code as a test problem for a segmentation model specifically developed for
images with texture~\cite{bib:HouhouThiranBresson2009}, which is also implemented in the code. This model uses the well-known
Kullback-Leibler divergence function regularized with a TV term. The model is solved with the SB method.
We perform the segmentation of \texttt{tiger} with the CEN model, the textural segmentation model and our approaches,
to investigate whether our locally adaptive model can be also suitable for textural images.
The \texttt{cameraman} image is perturbed with
additive Gaussian noise, with zero mean and two values of standard deviation, $\sigma = 15, 25$, with the aim of evaluating
the behavior of our adaptive approaches on noisy images. The noisy images are called \texttt{cameraman15} and \texttt{cameraman25}.

For the images provided with Bresson's code, the values of $\lambda$ and $\mu$ associated with the original CEN model are set as in that code.
For the remaining images, the values of $\lambda$ and $\mu$ are set by trial and error, following the empirical rule reported in Bresson's code.
The values of $\lambda_{min}$ and $\lambda_{max}$ used in the spatially adaptive approaches are chosen so that the corresponding
$\lambda$ in the original CEN model is in $[\lambda_{min}, \lambda_{max}]$, with few exceptions.
The associated values of $\mu$ are set as in the non-adaptive case. The values of $\lambda$, $\lambda_{min}$, $\lambda_{max}$ and $\mu$ 
used for each image are specified in Tables~\ref{tab:params_and_iters} to~\ref{tab:cameraman}.
The values of $\lambda_{i,j}$ in the adaptive models are initialized as specified in~\eqref{eq:ctd}, \eqref{eq:mm} and \eqref{eq:thr}.
As described in Section~\ref{sec:thr}, in the strategy based on thresholding those values change at each iteration
$k$ of Algorithm~\ref{alg:SBLOC}. It is worth noting that our approach also simplifies the choice of the regularization parameter,
which may be a time-consuming task.

The initial approximation $u^0$ is set equal to the image $\bar u_{i,j}$, which takes its values in $[0,1]$, as specified in
Section~\ref{sec:intro}. This is used to compute the starting values of $c_1$ and~$c_2$, and
$ s_{i,j} = \left( c_1-\bar u_{i,j} \right)^2 - \left( c_2-\bar u_{i,j} \right)^2 $ for all $(i,j)$. In the original non-adaptive code,
the value of $\lambda$ is scaled using the difference between the maximum and the minimum value of $s_{i,j}$;
the same scaling is applied to $\lambda_{min}$ and $\lambda_{max}$ in the implementations of the adaptive approaches.

We run the tests on an Intel Core~i7 processor with clock frequency of~2.6 GHz, 8~GB of RAM, and a 64-bit Linux system.

For each of the six images \texttt{bacteria}, \texttt{bacteria2}, \texttt{brain}, \texttt{flowerbed}, \texttt{ninetyeight} and \texttt{squirrel}
we show the results obtained with the spatially adaptive strategy yielding the best segmentation for that image. The corresponding
(unscaled) values of $\lambda$, $\lambda_{min}$, $\lambda_{max}$, the value of $\mu$, as well as
the number of outer iterations and the mean number of GS iterations per outer iteration are reported in Table~\ref{tab:params_and_iters}.
Note that for \texttt{squirrel} we use two values of $\lambda_{min}$, one equal to the value of $\lambda$ in the CEN model
and the other greater than that, obtained by trial and error. Both values of $\lambda_{min}$ produce the same segmentation,
but the larger value of $\lambda_{max}$ reduces the number of outer and GS iterations.
The segmentations corresponding to the data in Table~\ref{tab:params_and_iters} are shown in Figures~\ref{fig:CENvsCTD} to~\ref{fig:CENvsTHR}.
For \texttt{squirrel} we display the CTD segmentation computed by using the larger value of $\lambda_{min}$.

\begin{table}[h!]
\begin{center}
\caption{Segmentation of six test images by the CEN and spatially adaptive models: parameters and iterations.
The value of $\mu$ is the same for all the models, thus it is reported only once per image.
\label{tab:params_and_iters}}
\setlength{\tabcolsep}{6pt}
\begin{tabular}{ lccrlccrl}
\toprule
                                    & $\lambda$           & $\mu$                   & $\mathtt{it}$  & $\mathtt{it_{GS}}$ 
                                    & $\lambda_{min}$ & $\lambda_{max}$ & $\mathtt {it}$ & $\mathtt{it_{GS}}$ \\
\midrule
\textbf{image}             & \multicolumn{4}{c}{\textbf{CEN}}  & \multicolumn{4}{c}{\textbf{CTD}}  \\
\midrule
\texttt{brain}                 & $0.7$e$+3$ & $0.1$e$+4$  & 5 & 19.2 & $0.9$e$+3$ & $0.4$e$+5$ & 5 & 14.8  \\ 
\texttt{squirrel}             &  0.118e+3    & $0.1$e$+4$  & 6 & 50    & $0.118$e$+3$ & $0.1$e$+4$ & 15 & 50  \\
                                    &                     &                      &    &         & $0.2$e$+3$ & $0.1$e$+4$ & 5 & 45  \\
\midrule
                                    & \multicolumn{4}{c}{\textbf{CEN}}  & \multicolumn{4}{c}{\textbf{MM}} \\
\midrule
\texttt{bacteria2}          & $0.1$e$+3$ & $0.1$e$+4$  & 2 & 50   & $0.1$e$+3$ & $0.1$e$+4$ & 10 & 50 \\
\texttt{ninetyeight}       & $0.2$e$+2$ & $0.1$e$+3$  & 4 & 38.5 & $0.1$e$+2$ & $0.5$e$+2$ &  3 & 50 \\
\midrule
                                    & \multicolumn{4}{c}{\textbf{CEN}} & \multicolumn{4}{c}{\textbf{THR}} \\
\midrule
\texttt{bacteria}            & $0.5$e$+4$ & $0.1$e$+5$ & 6 & 27.5  & $0.4$e$+4$  & $0.8$e$+4$ & 7 & 30.7 \\
\texttt{flowerbed}         & $0.1$e$+2$ & $0.1$e$+4$ & 2 & 27     & $0.3$e$+2$         & $0.1$e$+3$         & 2 & 16.5 \\ 
\bottomrule
\end{tabular}
\end{center}
\end{table}

We see that on the selected problems CTD reduces the number of outer and GS iterations with respect to the CEN model; on the other hand,
the setup of the regularization parameters is computationally more expensive. In terms of iterations, there is no clear winner between CEN and MM
and between CEN and THR. The models based on the spatially adaptive techniques are slightly more expensive than the CEN model in this case too.
A~significant result is that the segmentations obtained with the adaptive techniques appear better than those obtained with the non-adaptive model,
i.e., the spatially adaptive models can better outline boundaries between objects and foreground. This is clearly visible by looking
at the segmentations of \texttt{brain}, \texttt{bacteria2} (see the upper right corner), \texttt{ninetyeight}, \texttt{bacteria} (see the shape of the bacteria).
It is also worth noting that the adaptive model based on THR removes textural details that do not belong to the flowerbed in the homonym image.

The latter observation is confirmed by the experiments performed on~\texttt{tiger}. The corresponding model and algorithmic details are reported in
Table~\ref{tab:tiger}, while the segmentations are shown in Figure~\ref{fig:smoothVStexture} along with (visual) information on quantities used
to define $\lambda_{i,j}$ (see~\eqref{eq:rrrltv} and~\eqref{eq:MMweights}). We see that the CTD and MM strategies produce satisfactory
results, although they have been obtained by generalizing a non-textural model.

Finally, we show the results obtained on \texttt{cameraman} and its noisy versions by using CEN, CTD, MM, and THR.
The methods perform comparable numbers of inner and GS iterations (see Table~\ref{tab:cameraman}), but the spatially adaptive
model THR yields some improvement over the CEN model on the noisy images (Figure~\ref{fig:noisy}).

\section{Conclusions\label{sec:conclusions}}

We introduced spatially adaptive regularization in a well-established variational segmentation model with
the aim of improving the segmentation of images by suitably taking into account their smooth and nonsmooth regions.
To this aim, we introduced three techniques, based on the application of suitable spatial filters or thresholding.
The locally adaptive models, solved via an alternating minimization method using split Bregman iterations, showed
the effectiveness of our approaches on several images, including also textural and noisy images. We also believe that
the proposed models may simplify the setup of the regularization parameter.

Future work can include the extension of our spatially adaptive strategies to other segmentation models and the
development of further adaptive techniques.

\begin{figure*}[ht!]
\medskip
\begin{center}
\newcolumntype{C}{>{\centering\arraybackslash} m{.30\textwidth} }
\begin{tabular}{CCC}
\includegraphics[width=.24\textwidth]{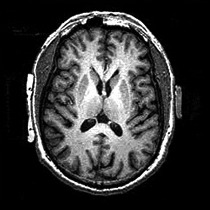} &
\includegraphics[width=.24\textwidth]{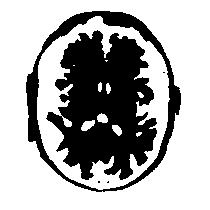} &
\includegraphics[width=.24\textwidth]{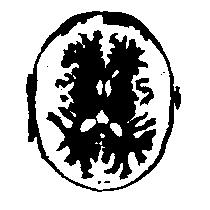} \\[1mm]
\multicolumn{1}{c}{\small \texttt{brain}}  &
\multicolumn{1}{c}{\small CEN }  &
\multicolumn{1}{c}{\small CTD }\\[1mm]
\\[1mm]
\includegraphics[width=.24\textwidth]{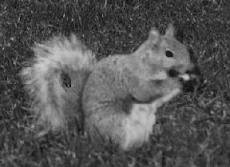} &
\includegraphics[width=.24\textwidth]{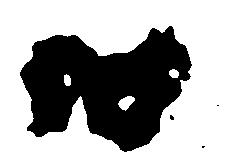} &
\includegraphics[width=.24\textwidth]{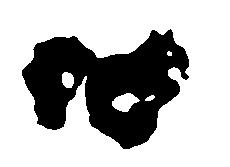} \\[1mm]
\multicolumn{1}{c}{\small \texttt{squirrel} }  &
\multicolumn{1}{c}{\small CEN }  &
\multicolumn{1}{c}{\small CTD  }\\[1mm]
\end{tabular}
\vskip 1pt
\caption{Segmentations of \texttt{brain} and \texttt{squirrel} by using CEN and CTD.
\label{fig:CENvsCTD}}
\end{center}
\end{figure*}
\begin{figure}[h!]
\medskip
\begin{center}
\newcolumntype{C}{>{\centering\arraybackslash} m{.30\textwidth} }
\begin{tabular}{CCC}
\includegraphics[width=.24\textwidth]{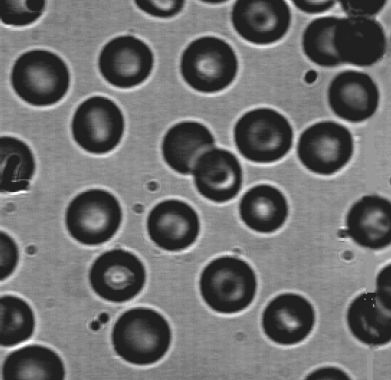} &
\includegraphics[width=.24\textwidth]{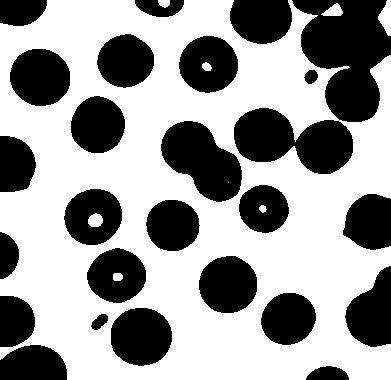} &
\includegraphics[width=.24\textwidth]{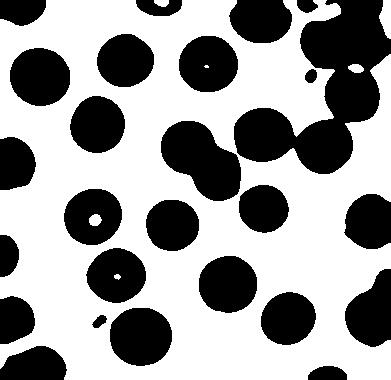} \\[1mm]
\multicolumn{1}{c}{\small \texttt{bacteria2}}  &
\multicolumn{1}{c}{\small CEN }  &
\multicolumn{1}{c}{\small MM  }\\[1mm]
\\[1mm]
\includegraphics[width=.24\textwidth]{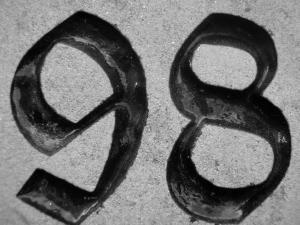} &
\includegraphics[width=.24\textwidth]{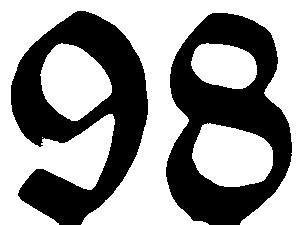} &
\includegraphics[width=.24\textwidth]{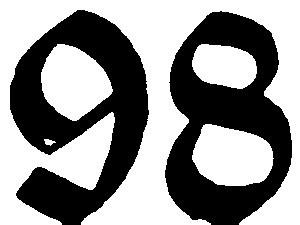} \\[1mm]
\multicolumn{1}{c}{\small \texttt{ninetyeight}}  &
\multicolumn{1}{c}{\small CEN }  &
\multicolumn{1}{c}{\small MM } \\[1mm]
\end{tabular}
\vskip 1mm
\caption{Segmentations of \texttt{bacteria2} and \texttt{ninetyeight} by using CEN and MM.
\label{fig:CENvsMM}}
\end{center}
\end{figure}

\begin{figure}[ht!]
\medskip
\begin{center}
\newcolumntype{C}{>{\centering\arraybackslash} m{.30\textwidth} }
\begin{tabular}{CCC}
\includegraphics[width=.24\textwidth]{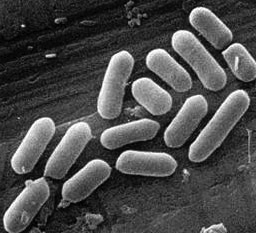} &
\includegraphics[width=.24\textwidth]{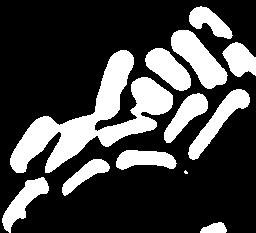} &
\includegraphics[width=.24\textwidth]{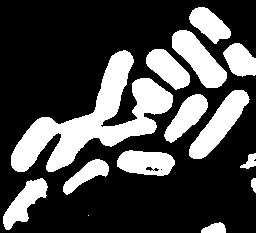} \\[1mm]
\multicolumn{1}{c}{\small \texttt{bacteria}}  &
\multicolumn{1}{c}{\small CEN }  &
\multicolumn{1}{c}{\small THR } \\[1mm]
\\[1mm]
\includegraphics[width=.24\textwidth]{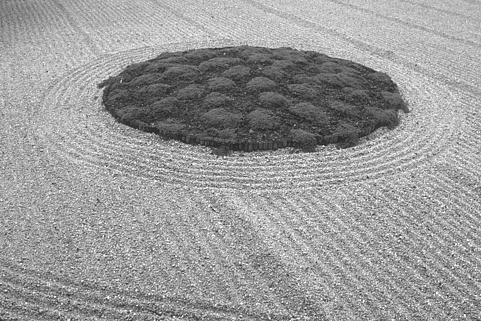} &
\includegraphics[width=.24\textwidth]{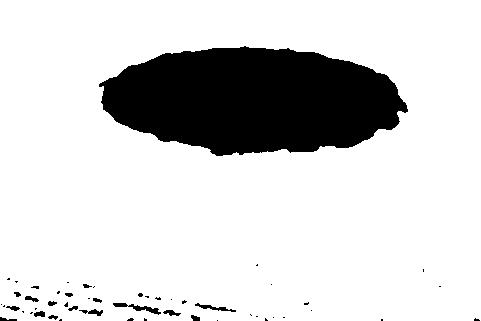} &
\includegraphics[width=.24\textwidth]{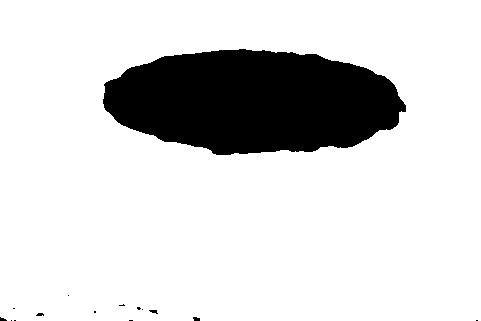} \\[1mm]
\multicolumn{1}{c}{\small \texttt{flowerbed}}  &
\multicolumn{1}{c}{\small CEN }  &
\multicolumn{1}{c}{\small THR } \\[1mm]
\end{tabular}
\vskip 1mm
\caption{Segmentations of \texttt{bacteria} and \texttt{flowerbed} by using CEN and THR.
\label{fig:CENvsTHR}}
\end{center}
\end{figure}

%
\begin{table}[h!]
\begin{center}
\caption{Segmentation of \texttt{tiger} by the texture~\cite{bib:HouhouThiranBresson2009}, CEN, CTD and MM models:
parameters and iterations.\label{tab:tiger}}
\setlength{\tabcolsep}{6pt}
\begin{tabular}{lccrlcccrl}
\toprule
 \textbf{image}             & $\lambda$           & $\mu$                   & $\mathtt{it}$  & $\mathtt{it_{GS}}$
                                    & $\lambda_{min}$ & $\lambda_{max}$ & $\mu$ & $\mathtt {it}$ & $\mathtt{it_{GS}}$ \\
\midrule
                                    & \multicolumn{4}{c}{\textbf{Texture Model}}  & \multicolumn{5}{c}{\textbf{CTD}}  \\
\midrule
\texttt{tiger}                 & $0.3$e$+1$ & $0.1$e$+3$  & 5 & 12 & $0.1$e$+1$ & $0.1$e$+2$  & $0.1$e$+2$  & 8 & 50 \\
\midrule
                                    & \multicolumn{4}{c}{\textbf{CEN}}  & \multicolumn{5}{c}{\textbf{MM}} \\
\midrule
\texttt{tiger}                  & $0.3$e$+2$ & $0.1$e$+3$ & 7 & 49.6  & $0.1$e$+1$ & $0.1$e$+2$ & $0.1$e$+2$  & 8 & 50 \\
\bottomrule
\end{tabular}
\end{center}
\end{table}

\begin{figure}[h!]
\medskip
\begin{center}
\newcolumntype{C}{>{\centering\arraybackslash} m{.30\textwidth} }
\begin{tabular}{CCC}
\includegraphics[width=.24\textwidth]{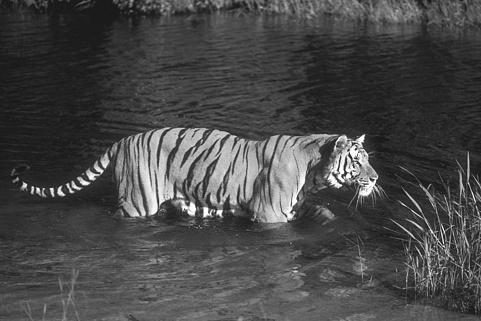} &
\includegraphics[width=.24\textwidth]{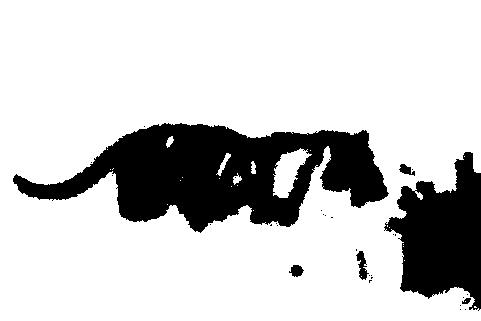} &
\includegraphics[width=.24\textwidth]{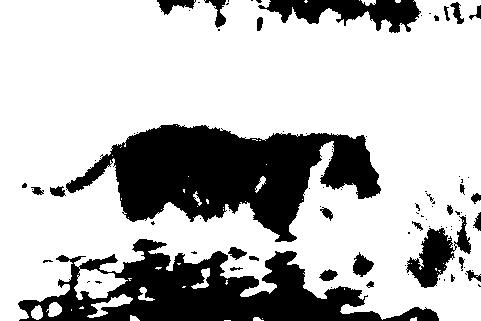} \\[1mm]
\multicolumn{1}{c}{\small \texttt{tiger}}  &
\multicolumn{1}{c}{\small texture model}  &
\multicolumn{1}{c}{\small CEN} \\[1mm]
\\[3mm]
\includegraphics[width=.24\textwidth]{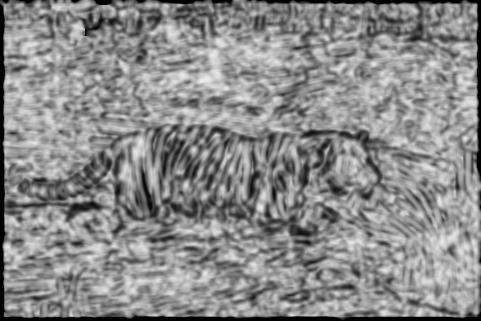} &
\includegraphics[width=.24\textwidth]{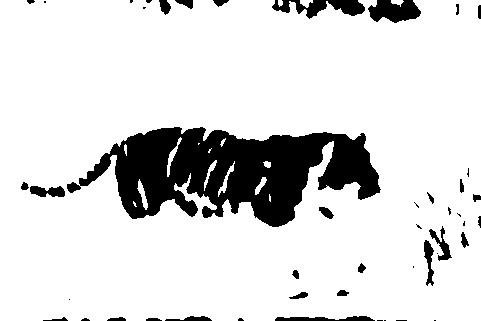} &
 \\[1mm]
 \multicolumn{1}{c}{$\rho_\sigma$}  &
\multicolumn{1}{c}{\small CTD}  &
\multicolumn{1}{c}{} \\[1mm]
\\[1mm]
\includegraphics[width=.24\textwidth]{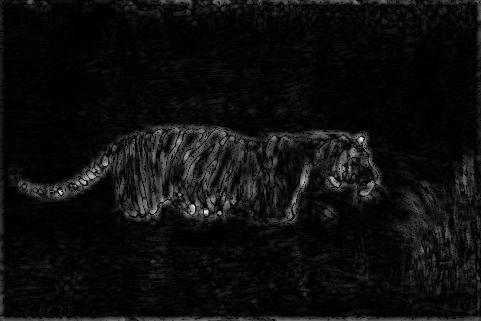} &
\includegraphics[width=.24\textwidth]{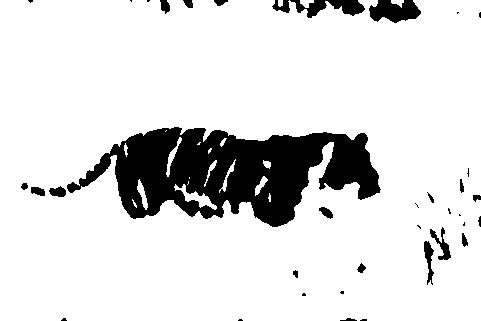} &
\\[1mm]
 \multicolumn{1}{c}{$| L_{h1} * \bar u - M_{h2} * \bar u |$}  &
\multicolumn{1}{c}{\small MM}  &
\multicolumn{1}{c}{} \\[3mm]
\end{tabular}
\vskip 1mm
\caption{Segmentations of \texttt{tiger} obtained by using the textural, CEN, CTD and MM models.
A representation of quantities used to define $\lambda_{i,j}$ (see~\eqref{eq:rrrltv} and~\eqref{eq:MMweights}) is also
provided.\label{fig:smoothVStexture}}
\end{center}
\end{figure}

\begin{table}[h!]
\begin{center}
\caption{Segmentations of {\tt cameraman} and its noisy versions by using CEN and the spatially adaptive models: parameters and iterations.
The value of $\mu$ is the same for all the models, thus it is reported only once per image.\label{tab:cameraman}}
\setlength{\tabcolsep}{6pt}
\begin{tabular}{lccrlccrl}
\toprule
                                               & \multicolumn{4}{c}{\textbf{CEN}}  & \multicolumn{4}{c}{\textbf{CTD}}  \\
\midrule
\textbf{image}                         & $\lambda$           & $\mu$                   & $\mathtt{it}$  & $\mathtt{it_{GS}}$ 
                                               & $\lambda_{min}$ & $\lambda_{max}$ & $\mathtt {it}$ & $\mathtt{it_{GS}}$ \\
\midrule
\texttt{cameraman}                 & $0.8$e$+3$  & $0.1$e$+3$  & 4 & 2.8 & $0.5$e$+3$  & $0.1$e$+4$ & 4 & 3   \\ 
\texttt{cameraman15}             & $0.3$e$+3$  & $0.1$e$+3$  & 5 & 4.2 & $0.3$e$+3$  & $0.1$e$+4$ & 5 & 4.2 \\ 
\texttt{cameraman25}             & $0.17$e$+3$ & $0.1$e$+3$  & 6 & 7   & $0.17$e$+3$ & $0.8$e$+3$ & 6 & 7   \\ 
\midrule
                                              & \multicolumn{4}{c}{\textbf{MM}}  & \multicolumn{4}{c}{\textbf{THR}} \\
\midrule
\textbf{image}                        & $\lambda_{min}$ & $\lambda_{max}$ & $\mathtt{it}$ & $\mathtt{it_{GS}}$
                                              & $\lambda_{min}$ & $\lambda_{max}$ & $\mathtt{it}$ & $\mathtt{it_{GS}}$ \\
\midrule
\texttt{cameraman}                & $0.5$e$+3$  & $0.1$e$+4$ & 4 & 3   & $0.5$e$+3$  & $0.1$e$+4$ & 4 & 2.8  \\
\texttt{cameraman15}            & $0.3$e$+3$  & $0.1$e$+4$ & 4 & 4.2 & $0.3$e$+3$  & $0.1$e$+4$ & 5 & 3.6  \\
\texttt{cameraman25}            & $0.17$e$+3$ & $0.5$e$+3$ & 6 & 7   & $0.17$e$+3$ & $0.8$e$+3$ & 7 & 5.7 \\
\bottomrule
\end{tabular}
\end{center}
\end{table}

\begin{figure}[p!]
\medskip
\begin{center}
\newcolumntype{C}{>{\centering\arraybackslash} m{.30\textwidth} }
\begin{tabular}{CCC}
\includegraphics[width=.24\textwidth]{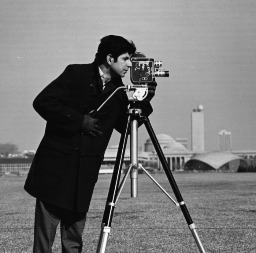} &
\includegraphics[width=.24\textwidth]{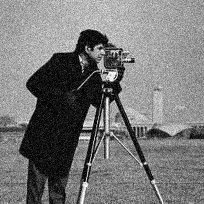} &
\includegraphics[width=.24\textwidth]{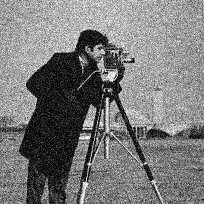} \\[1mm]
{\small \texttt{cameraman}}  &
{\small Gaussian noise ($\sigma = 15$)}  &
{\small Gaussian noise ($\sigma = 25$)} \\[1mm]
\includegraphics[width=.24\textwidth]{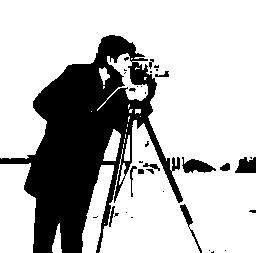} &
\includegraphics[width=.24\textwidth]{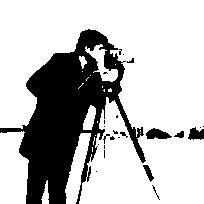} &
\includegraphics[width=.24\textwidth]{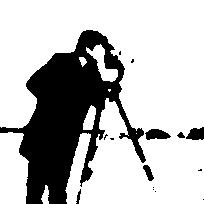} \\[1mm]
\multicolumn{3}{c}{\small CEN} \\ \hline \\[1mm]
\includegraphics[width=.24\textwidth]{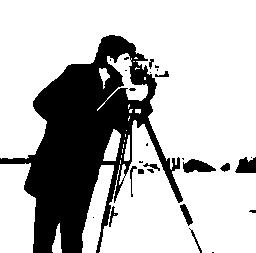} &
\includegraphics[width=.24\textwidth]{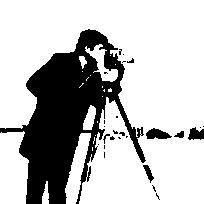} &
\includegraphics[width=.24\textwidth]{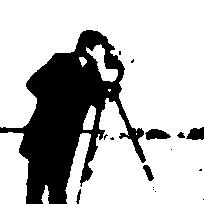} \\[1mm]
\multicolumn{3}{c}{\small CTD} \\ \hline \\[1mm]
\includegraphics[width=.24\textwidth]{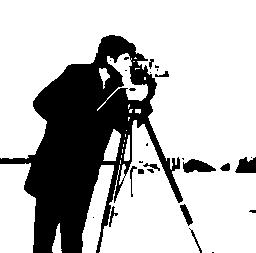} &
\includegraphics[width=.24\textwidth]{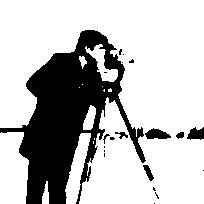} &
\includegraphics[width=.24\textwidth]{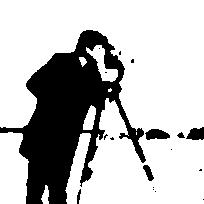} \\[1mm]
\multicolumn{3}{c}{\small MM} \\ \hline \\[1mm]
\includegraphics[width=.24\textwidth]{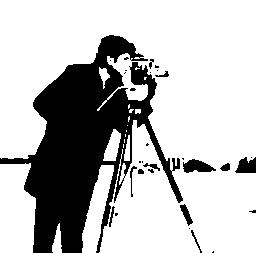} &
\includegraphics[width=.24\textwidth]{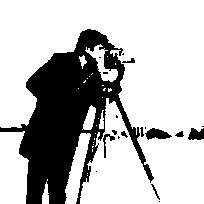} &
\includegraphics[width=.24\textwidth]{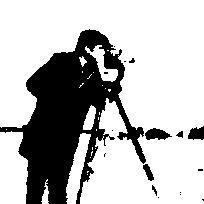} \\[1mm]
\multicolumn{3}{c}{\small THR} \\\hline
\end{tabular}
\caption{Segmentations of \texttt{cameraman} and its noisy versions by using the CEN, CTD, MM, and
THR models.\label{fig:noisy}}
\end{center}
\end{figure}

\clearpage

\authorcontributions{All the authors contributed equally to this work. They have read and agreed to the published
version of the manuscript.}

\funding{This work was partially supported by Istituto Nazionale di Alta Matematica - Gruppo Nazionale per il Calcolo Scientifico
(INdAM-GNCS), Italy. L.~Antonelli was also supported by the Italian Ministry of University and  Research under grant no. PON03PE\_00060\_5.}


\conflictsofinterest{The authors declare no conflict of interest. The funders had no role in the design of the study; in the collection, analyses,
or interpretation of data; in the writing of the manuscript, or in the decision to publish the results.}


\reftitle{References}
\externalbibliography{yes}
\bibliography{biblio_adaptiveparam}
\end{document}